\newtheorem{theorem}{Theorem}
\title{\vspace{-0.in}\parbox{\linewidth }{\footnotesize\noindent
} \\  \bf Central configurations of the spatial seven-body problem: two twisted triangles plus one}
\author{
\\ Allyson Oliveira \\
\small {Departamento de Matem\'atica, Universidade Federal de Sergipe}
\\ \small{S\~ao Crist\'ov\~ao-SE, CEP. 49100-000, Brazil e-mail: allyson@mat.ufs.br }
}
\date{ }
\begin{document}
\maketitle

\author{ \ }

\begin{abstract}
In this work we are interested in the central configurations of the spatial seven-body problem where six of them are at vertices of two congruents equilateral triangles belong to parallel planes and one triangle is a rotation by the angle $\pi/3$ from the other one. We show a existence and uniqueness of a class of central configuration with this shape, namely six bodies with equal masses at the vertice of a regular octahedron with the seventh body in its center. There is not restriction to the mass of the seventh body.
\end{abstract}


\maketitle

\section{Introduction}

The N-body problem concerns the study of the dynamics of $N$ point masses subject to their mutual Newtonian gravitational interaction. If $N>2$, it is impossible to obtain a general solution to this problem because of its non-integrability; hence, particular solutions are very important in this case. Central configurations are the configurations  for which the total Newtonian acceleration of every body is equal to a constant multiplied by the position vector of this body with respect to the center of mass of the configuration. One of the reasons why central configurations are interesting is because they allow us to obtain explicit homographic solutions of the N-body problem i. e. motion where the configuration of the system changes its size but keeps its shape. Moreover, they arise as the limiting configuration of a total collapse. The papers dealing with central configurations have focused on several aspects such as finding examples of particular central configurations, giving the number of central 
configurations and studying their symmetry, stability,
stacking properties, etc. A stacked central configuration is a central configuration that contains others central configurations by dropping some bodies. Some important references on this theme are \cite{albouy1}, \cite{albouy2}, \cite{hampton}, \cite{hampton1}, \cite{hagihara}, \cite{saari} and \cite{wintner}.

This work deals with central configurations of the spatial seven-body problem where six masses are at the vertices of two equilateral triangles. Those triangles belong to parallel planes and one of then is the rotation by the angle $\pi/3$ rad with respect to the other one. The seventh body is located between these planes on the line perpendicular passing through the center of the triangles and equidistant to them. \cite{mello} studied the case where the triangles are not rotated and they proof the existence of one and only one class of central configuration of that problem. Our results are similar to them as they had predicted. The masses at the triangles must be equal and the only central configuration is that in which those masses are at vertices of a regular octahedron and it does not depends on the value of the seventh mass.
\section{Preliminaries}\label{preliminaries}

Consider $N$ masses, $m_1,...,m_N$, in $\mathbb{R}^3$ subject to their mutual Newtonian gravitational interaction. Let $M=diag\{m_1,m_1,...,m_N,m_N\}$ be the matrix of masses and let $r=(r_1,...,r_N), r_i\in \mathbb{R}^3$ be the position vector. The equations of motion in an inertial reference frame with origin at the center of mass are given by
$$M\ddot{r} = \frac{\partial V}{\partial r},$$
where $V(r_1,...,r_N)=\displaystyle\sum_{1\leq i<j\leq N}\frac{m_im_j}{\|r_i-r_j\|}.$

A non-collision configuration $r=(r_1,...,r_N)$ with $\sum_{i=1}^Nm_ir_i =0$ is a {\it central configuration} if there exists a constant $\lambda$ such that
$$M^{-1}V_r = \lambda r.$$

The value of $\lambda$ is uniquely determined by $\lambda = \frac{U}{2I},$ where $I=\frac{1}{2}\sum_{i=1}^{N}m_i|r_i|^2$ is the moment of inertia of the system. So $\lambda$ is a positive constant.

There are some equivalents ways to describe central configurations. Nonplanar central configurations can be given by Laura-Androyer-Dziobeck equations (see \cite{hagihara})

\begin{equation}\label{dziobeck}
f_{ijh}=\sum_{\substack{k=1\\ k \neq i,j,h}}^{n}m_{k}(R_{ik}-R_{jk})\bigtriangleup_{ijhk}=0,
\end{equation}
for $1\leq i < j \leq n, h=1,...,n, h\neq i,j.$ Here $r_{ij}=\|r_i-r_j\|,R_{ij}=r_{ij}^{-3}$ and $\bigtriangleup_{ijhk}=(r_{i}-r_{j})\wedge(r_{j}-r_{h}).(r_{h}-r_{k})$ is six time the signed volume of the tetrahedron formed by the bodies at positions $r_{i}, r_{j}, r_{h}$ and $r_{k}.$ This is a system with $n(n-1)(n-2)/2$ equations very useful in problem with many symmetries.

\begin{figure}
  \centering
  \includegraphics[width=3in]{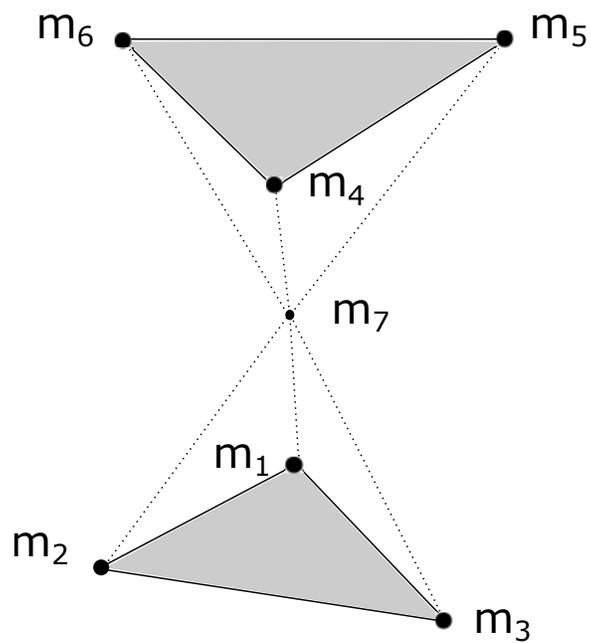}
  \caption{The seven-bodies problem studied here.}
  \label{7bodies}
\end{figure}

\begin{figure}
  \centering
  \includegraphics[width=3in]{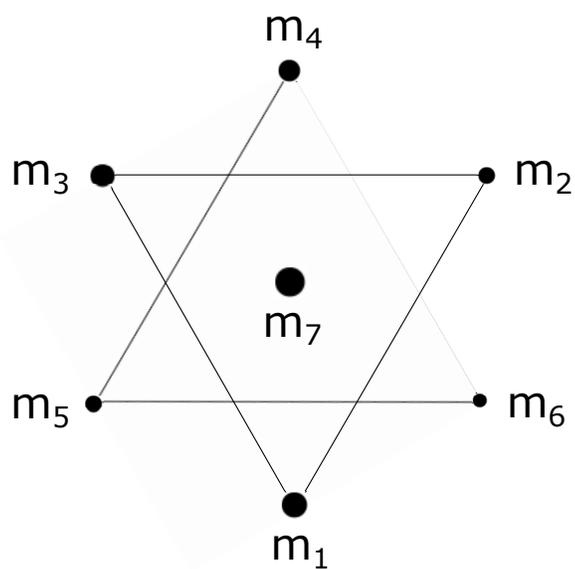}
  \caption{The top view of the seven-bodies problem studied here.}
  \label{topview}
\end{figure}
\section{Main Result}

We are interested in a particular spatial seven bodies problem where six masses are at the vertices of two equilateral triangles belong to parallel planes and one of then is a rotation by the angle $\pi/3$ with respect to the other one. The seventh body is located between these planes on the line perpendicular passing through the center of the triangles and equidistant to them. See Figure \ref{7bodies}. Our main result is the theorem following which characterizes such configurations.
\begin{theorem}\label{lemaequalmasses} 
Consider seven bodies with position vectors $r_1,r_2,r_3,r_4,r_5,r_6$ and $r_7$ and masses $m_1,m_2,m_3,m_4,m_5,m_6$ and $m_7$, respectively. Suppose that $r_1,r_2, r_3$ are at the vertices of a equilateral triangle and $r_4,r_5,r_6$ are at vertices of another equilateral triangle located in a plane parallel to the first one. The second triangle coincides with the first one under a rotation by the angle $\pi/3$ and a projection to its plane. $r_7$ is located between these planes on the line perpendicular passing through the center of the triangles and equidistant to them. See Figure \ref{7bodies} and Figure \ref{topview}. Then we have a central configuration if and only if $m_1=m_2=m_3=m_4=m_5=m_6$, $r_1,r_2,r_3,r_4,r_5$ and $r_6$ are at the vertices of a regular octahedron with $r_7$ at its center. See Figure \ref{octahedron}.
\end{theorem}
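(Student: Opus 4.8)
The plan is to place body $7$ at the origin with the common axis of the two triangles along the $z$-axis, writing $r_1,r_2,r_3$ at circumradius $a$ and height $h$ at angles $0,2\pi/3,4\pi/3$, and $r_4,r_5,r_6$ at the same radius and height $-h$ at angles $\pi/3,\pi,5\pi/3$. A direct computation shows that only four mutual distances occur: the common triangle side $b=\sqrt3\,a$ (with $R=\alpha:=b^{-3}$), the ``adjacent'' cross distance $c=\sqrt{a^2+4h^2}$ ($R=\beta:=c^{-3}$), the ``antipodal'' cross distance $d=2\sqrt{a^2+h^2}$ ($R=\gamma:=d^{-3}$), and the common distance $e=\sqrt{a^2+h^2}=d/2$ from body $7$ ($R=\delta:=e^{-3}$). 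Two structural facts drive everything: the antipodal pairs satisfy $r_i+r_{i'}=0=2r_7$, so $\{1,5,7\},\{2,6,7\},\{3,4,7\}$ are each collinear (whence many $\Delta_{ijhk}$ vanish), and the octahedron is exactly the member of the family with $\alpha=\beta$, i.e. $b=c$, i.e. $h=a/\sqrt2$. For sufficiency I would simply verify that with $h=a/\sqrt2$ and $m_1=\dots=m_6$ the center of mass coincides with body $7$ and the octahedral symmetry makes the force on each vertex central with a common multiplier, for every value of $m_7$; this is a short direct check.

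For necessity I would work from the Laura--Andoyer--Dziobeck equations (\ref{dziobeck}), which hold for any central configuration. First I would show the masses split into two triples. In $f_{123}=\sum_k m_k(R_{1k}-R_{2k})\Delta_{123k}$ the coefficients $R_{1k}-R_{2k}$ vanish for $k\in\{3,4,7\}$, and of the remaining terms $k=5,6$ one checks $\Delta_{1235}=\Delta_{1236}\neq0$ (the base $123$ is horizontal, so only the equal vertical drops of $5$ and $6$ matter), leaving $(\gamma-\beta)(m_5-m_6)\Delta_{1235}=0$. Since $\gamma=\beta$ would force $3a^2=0$, we get $m_5=m_6$, and the $C_3$ symmetry $1\to2\to3$, $4\to5\to6$ upgrades this to $m_4=m_5=m_6=:q$; the mirror computation with $f_{456}$ gives $m_1=m_2=m_3=:p$.

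Next I would pin the shape. Evaluating $f_{147}$, the collinear triples kill the $k=3,5$ terms, leaving $k=2,6$ with $\Delta_{1476}=-\Delta_{1472}\neq0$, so that $f_{147}=(\alpha-\beta)(p+q)\Delta_{1472}$; as $p+q>0$ this forces $\alpha=\beta$, the octahedral condition $h=a/\sqrt2$. Finally, to force $p=q$, I would use the full central-configuration equations. Body $7$'s equation collapses to $(\delta M-\lambda)(r_7-c_0)=0$, where $M$ is the total mass and $c_0$ the center of mass; since $c_0=(0,0,3h(p-q)/M)$, a nonzero $p-q$ would require $\lambda=\delta M$. On the other hand, eliminating $\lambda$ between the horizontal and vertical components of body $1$'s equation gives $\lambda=\alpha M$ whenever $p\neq q$. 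As $\alpha\neq\delta$ at the octahedron, this is a contradiction, so $p=q$; then $c_0=r_7$ and all six masses are equal, completing the characterization.

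The routine labor is the signed-volume bookkeeping, but the genuine obstacle is the equality $p=q$. The Dziobeck equations are translation invariant and therefore blind to the position of the center of mass, so they yield the two mass-triples and the octahedral shape but never distinguish the upper triangle's mass from the lower one's; the equality can only be extracted from the scale information in the full equations, and the cleanest route I see is the comparison of the multiplier $\lambda$ computed from body $7$ against that computed from a vertex. Making that comparison airtight (and confirming $\alpha\neq\delta$, together with the non-collision and non-degeneracy of the unique solution $h=a/\sqrt2$) is where the argument must be handled with care.
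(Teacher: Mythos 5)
Your necessity argument is correct, and its first two stages coincide with the paper's: the two mass triples come from $f_{123}$ and its images under the symmetries, and the octahedral condition $\alpha=\beta$ from an equation with $i,j$ an adjacent cross pair (the paper uses $f_{142}$; its antipodal pairs are $(1,4),(2,5),(3,6)$ rather than your $(1,5),(2,6),(3,4)$, so the indices differ but the computations are identical). Where you genuinely diverge is the last step, $p=q$. The paper never leaves the Dziobeck system: it closes with $f_{162}=0$, where $1,6$ are an adjacent cross pair and $2$ is a second vertex of the triangle containing $1$. In that equation the $k=7$ term vanishes, the term for the remaining vertex of the other triangle carries the coefficient $\beta-\alpha$ and dies once the octahedral condition is imposed, and the two surviving terms (the antipodes of $1$ and of $6$) combine, because $\Delta_{1623}=\Delta_{1624}\neq0$, into $(\alpha-\gamma)\Delta_{1623}(m_3-m_4)$; since $\alpha\neq\gamma$ this forces $p=q$. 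Your route instead invokes the full central-configuration equations: $\lambda=\delta M$ from body $7$ when $c_0\neq r_7$, and $\lambda=\alpha M$ from body $1$ when $p\neq q$. I checked the latter elimination and it does work --- the horizontal component gives $\lambda=3p\alpha+q\beta+2q\gamma+m_7\delta$, the vertical one gives $\lambda(6q+m_7)/M=4q\beta+2q\gamma+m_7\delta$, and with $\beta=\alpha$ and $M-6q-m_7=3(p-q)$ their difference yields $\lambda=\alpha M$ for $p\neq q$; finally $\alpha\neq\delta$ because $\sqrt{3}\,a\neq\sqrt{3/2}\,a$. So both proofs are valid; the paper's is shorter and stays in one formalism, while yours makes the role of the center of mass explicit and is arguably more transparent about \emph{why} the two triangles cannot carry different masses.

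One assertion must be deleted, however: your claim that the Laura--Andoyer--Dziobeck equations, being translation invariant, are ``blind to the position of the center of mass'' and therefore ``never distinguish the upper triangle's mass from the lower one's.'' This is false, as the evaluation of $f_{162}$ above shows. Translation invariance is no obstruction: being a central configuration is itself a translation-invariant property (the center appearing in the equations is forced to be the center of mass), and the masses enter the $f_{ijh}$ explicitly as coefficients, so the Dziobeck system can and does detect the asymmetry $p\neq q$. The misconception does not invalidate your proof, since the route you substitute is sound, but the stated justification for taking it is wrong.
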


\begin{proof}
Using equations \ref{dziobeck}, we have
$$f_{ijh}=\sum_{\substack{k=1\\ k \neq i,j,h}}^{7}m_{k}(R_{ik}-R_{jk})\bigtriangleup_{ijhk}=0,
$$
for $1\leq i < j \leq 7, h=1,...,7, h\neq i,j.$

The symmetry given by our assumption implies in 27 already identically zero equations:
$$f_{124}=f_{125}=f_{127}=f_{134}=f_{136}=f_{137}=f_{147}=f_{174}=f_{235}=f_{236}=f_{237}=f_{257}=f_{275}=
f_{367}=$$ $$=f_{376}=f_{451}=f_{452}=f_{457}=f_{461}=f_{463}=f_{467}=f_{471}=f_{562}=f_{563}=f_{567}=f_{572}=f_{673}=0.$$

Consider the equation
$$f_{123}=0 \Leftrightarrow$$
$$m_{4}(R_{14}-R_{24})\bigtriangleup_{1234}+m_{5}(R_{15}-R_{25})\bigtriangleup_{1235}+m_{6}(R_{16}-R_{26})\bigtriangleup_{1236}+m_{7}(R_{17}-R_{27})\bigtriangleup_{1237}=0.$$
By our hypothesis we get $R_{17}=R_{27}, R_{16}=R_{26}, R_{24}=R_{15}$ and $R_{14}=R_{25}.$ Moreover $\bigtriangleup_{1234}=\bigtriangleup_{1235}.$ So $f_{123}=0$ is equivalent to 
$$(m_{4}-m_{5})(R_{14}-R_{24})\bigtriangleup_{1234}=0,$$
which implies $m_{4}=m_{5}$ since $R_{14}\neq R_{24}$ and $\bigtriangleup_{1234}\neq 0.$

Analogously the equations $f_{135}=0, f_{456}=0$ and  $f_{462}=0$ imply $m_{6}=m_{4}, m_{2}=m_{1}$ and $ m_{1}=m_{3},$  respectively.
 Now taking into account $\bigtriangleup_{1426}=-\bigtriangleup_{1423}$ and $\bigtriangleup_{1425}=0$ the equation $f_{142}=0$ implies
 $$(R_{13}-R_{34})(m_3+m_6)\bigtriangleup_{1423}=0$$ and so 
 $$R_{13}=R_{34},$$
 since $\bigtriangleup_{1423}\neq 0.$
 This in turn implies the equalities
 \begin{equation}\label{eqdistances}
R_{13}=R_{12}=R_{23}=R_{45}=R_{56}=R_{46}=R_{34}=R_{35}=R_{15}=R_{16}=R_{26}=R_{24}.\end{equation}
Therefore the six bodies $m_1,...,m_6$ form a regular octahedron.

Now we look to equation $$0=f_{162}=m_{3}(R_{13}-R_{63})\bigtriangleup_{1623}+m_{4}(R_{14}-R_{64})\bigtriangleup_{1624}+m_{5}(R_{15}-R_{65})\bigtriangleup_{1625}+m_{7}(R_{17}-R_{67})\bigtriangleup_{1627}.$$
Since $\bigtriangleup_{1623}=\bigtriangleup_{1624}, R_{17}=R_{67}$ and $R_{15}=R_{65}$ we get
$$(R_{13}-R_{63})\bigtriangleup_{1623}(m_3-m_4)=0$$
and so $m_3=m_4.$
Therefore all masses at the triangles have the same value, say $m_1=m_2=m_3=m_4=m_5=m_6=m.$  

Assuming equal masses at the triangles the following 12 equations are already satisfied: $$f_{126}=0, f_{135}=0,f_{234}=0,f_{453}=0,f_{462}=0,f_{561}=0,f_{123}=0,f_{456}=0,f_{132}=0,f_{465}=0,$$
$$f_{231}=0 \mbox{ and } f_{564}=0.$$
Assuming  $m_1=m_2=m_3=m_4=m_5=m_6$  we also get :
$$f_{142}=f_{146}=-f_{143}=-f_{145}=f_{253}=-f_{251}=f_{254}=-f_{256}=f_{361}=-f_{362}=$$
$$-f_{364}=f_{365}=-f_{164}=-f_{245}=-f_{356}=-f_{163}=-f_{241}=-f_{352}=f_{152}=f_{263}=$$
$$f_{265}=f_{341}=f_{346}=f_{154}=-2f_{162}=2f_{153}=-2f_{243}=2f_{261}=2f_{342}=-2f_{351}=$$
$$-2f_{165}=-2f_{246}=-2f_{254}=2f_{156}=2f_{264}=2f_{345}=0.$$
If we suppose (\ref{eqdistances}) these 36 equations are already identically zero.

Finally, we will take a look in those 45 equations $f_{ijh}=0$ where $j=7$ or $k=7$ with $i<j.$
First remember that we have already satisfied the following equations:
$$f_{127}=0, f_{237}=0, f_{457}=0, f_{567}=0, f_{137}=0, f_{467}=0, f_{147}=0,$$
$$f_{257}=0, f_{367}=0, f_{174}=0, f_{673}=0, f_{275}=0, f_{471}=0, f_{376}=0, f_{572}=0.$$

Moreover, for the remaining ones we have
$$f_{167}=f_{247}=f_{357}=-f_{157}=-f_{347}=-f_{267}=-2f_{172}=-2f_{273}=$$
$$-2f_{475}=-2f_{576}=-2f_{371}=-2f_{674}=2f_{173}=2f_{271}=2f_{372}=$$
$$2f_{476}=2f_{574}=2f_{675}=2f_{175}=2f_{671}=2f_{276}=2f_{472}=2f_{374}=$$
$$2f_{573}=2f_{176}=2f_{672}=2f_{274}=2f_{473}=2f_{375}=2f_{571}.$$

Since $\Delta_{1673}=\Delta_{1674}=0$ and $\Delta_{1672}=-\Delta_{1675}$, the equation $f_{167}=0$ becomes
$$2m(R_{12}-R_{26})\Delta_{1672}=0,$$
which is satisfied if and only if (\ref{eqdistances}) occurs.

Therefore a central configuration of the seven-body problem with shape given by our hypothesis there exists if and only if $m_1=m_2=m_3=m_4=m_5=m_6$ and $(\ref{eqdistances})$ holds true.

\end{proof}

\begin{figure}
  \centering
  \includegraphics[width=3in]{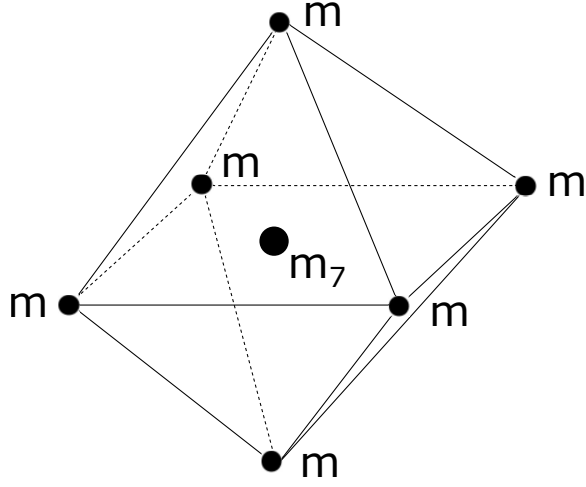}
  \caption{The regular octahedron: The only possible central configuration of the seven-bodies problem studied here.}
  \label{octahedron}
\end{figure}

\section{Remarks}

Our result is in accordance with the provisions by \cite{mello}. In fact, we obtain basically the same results but the calculation here were a little easier. If we take a system of coordinates in which
$$r_1=(l,0,-d),r_2=(-l/2,\sqrt{3}l/2,-d),r_3=(-l/2,-\sqrt{3}l/2,-d),$$
$$r_4=(-l,0,d),r_5=(l/2,-\sqrt{3}l/2,d),r_6=(l/2,\sqrt{3}l/2,d)$$
and $r_7=(0,0,0).$ 
The equation (\ref{eqdistances}) becomes
$$R_{13}=R_{34} \Leftrightarrow r_{13}=r_{34}\Leftrightarrow l=\sqrt{2}d.$$

Again, as well as the results and provisions by \cite{mello}, there is not restriction on the choice of $m_7.$
Equation (\ref{eqdistances}) says that the masses $m_1,m_2,m_3,m_4,m_5$ and $m_6$ are at the vertices of a regular octahedron, a well known central configuration. So the configuration studied in this paper results in a stacked central configuration with seven bodies. See Figure \ref{octahedron}.

\bibliographystyle{plainnat}

\medskip
\medskip

\end{document}